\newtheorem{prelem}{{\bf Theorem}}
 \newtheorem{theorem}{Theorem}
\newtheorem{corollary}[theorem]{Corollary}
\newtheorem{lemma}[theorem]{Lemma}
\theoremstyle{definition}
\newtheorem{definition}[theorem]{Definition}
\theoremstyle{remark}
\title{Stanley depth of powers of the path ideal}
\author{Alin \c{S}tefan\\ Petroleum and Gas University of Ploie\c{s}ti\\ 
Ploie\c{s}ti, Romania\\
{\tt
nastefan@upg-ploiesti.ro}\vspace{3mm} \\
}
\date{}
\begin{document}
\maketitle

\begin{abstract}
The aim of this paper is to give a formula for the Stanley depth of quotient of powers of the path ideal.
As a consequence, we establish that the behaivior of the Stanley depth of quotient of powers of the path 
ideal is the same as  a classical result of Brodmann on depth.
\vspace{3mm}

\noindent {\bf Keywords:} Monomial Ideals, Stanley depth, Stanley decompositions\\
{\bf MSC 2010}:  Primary: 13C15, Secondary: 13P10, 13F20
\end{abstract}

\maketitle

%%%%%%%%%%%%%%%%%%%%%%%%%%%%%%%%%%%%%%%%%%%%%%%%%%%%%%%%%%%%%%%%%
\section{Introduction}
Let $S=K[x_{1},\ldots,x_{n}]$ be the polynomial ring in $n$ variables over a field 
$K$ and $M$ be a finitely generated $\mathbb{Z}^{n}$-graded $S$-module. 
Let $u\in M$ be homogeneous and $Z\subset X=\{x_{1},\ldots,x_{n}\}$.
Then the $K[Z]$-submodule $uK[Z]$ of $M$ is called a {\em Stanley space} 
of $M$ if $uK[Z]$ is a free $K[Z]$-submodule of $M$ and $|Z|$ is called the 
{\em dimension} of $uK[Z]$.
A {\em Stanley decomposition} $\mathcal{D}$ of $M$ is a decomposition of $M$ as a 
direct sum of $\mathbb{Z}^{n}$-graded $K$-vector space 
\[\mathcal{D} : M=\bigoplus_{j=1}^{r}u_{j}K[Z_{j}],\] where each $u_{j}K[Z_{j}]$ 
is a Stanley space of $M.$\\ 
The number \[sdepth(\mathcal(D)=min\{|Z_{i}| :  i=1,\ldots, r\})\]  is  called
the {\em Stanley depth of decomposition $\mathcal{D}$} and the number 
\[sdepth(M):=max\{sdepth(\mathcal{D}) : \ \mathcal{D} \ is \ a  \ Stanley \ 
decomposition \ of \ M\}\]
is called {\em Stanley depth} of $M.$\\ In 1982 Stanley conjectureted in $\cite{RS}$
that $sdepth(M)\geq depth(M).$ Apel $\cite{A1}, \cite{A2}$ proved the conjecture
for a monomial ideal $I$ over $S$ and for the quotient $S/I$ in 
at most three variables. Anwar and Popescu $\cite{AP}$ and Popescu$\cite{P}$ proved 
the conjecture  for $S/I$ and $n=4, 5$; also for $n=5$ Popescu proved the conjecture 
for square free monomial ideal. In $\cite{HVZ}$ Herzog, Vl\u{a}doiu and Zheng 
introduced a method to compute the Stanley depth of a factor of a monomial ideal which 
was later developed into an effective algorithm by Rinaldo \cite{Ri} implemented in 
{\em CoCoA} \cite{Co}. 
Also, the explicit computation of the Stanley depth turns out to be a dificult problem
even for simpler monomial ideals or quotient of monomial ideals. For instance in 
$\cite{BHKTY}$ Bir$\acute{o}$ et al. proved that $sdepth(m)=\left\lceil \frac{n}{2}
\right\rceil$ where $m=(x_{1},\ldots,x_{n})$ is the graded maximal ideal of $S$ and 
where $\left\lceil \frac{n}{2}\right\rceil$ denote the smallest integer 
$\geq \frac{n}{2}.$

\section{Stanley depth of path ideal}
Let $G$ be a graph on $n$ vertices. The {\em edge ideal} $I=I(G)$ of the graph $G$ 
is the ideal generated by all monomials of the form $x_{i}x_{j}$ such that 
$\{x_{i}, x_{j}\}$  is an edge of $G$.
\begin{definition}
A {\em path} $P_{n}$ of lenght $n-1, \ n\geq 2$ is a set of $n$ distinct vertices 
$x_{1},\ldots,x_{n}$ and $n-1$ edges $x_{i}x_{i+1}$ for $1\leq i\leq n-1.$
\end{definition}
For $I=I(P_{n}),$ Morey $\cite{M}$ proved  that $depth(S/I)=\left\lceil \frac{n}{3} 
\right\rceil$ and for the powers of $I$ is given a lower bound, 
$depth(S/I^{t})\geq max \{\left\lceil \frac{n-t+1}{3}\right\rceil, 1\}.$ The proof 
makes repeated use of applying the {\em Depth Lemma:}
\begin{lemma}$(\cite{V}$, $Lemma \ 1.3.9)$
If \[0\longrightarrow U \longrightarrow M \longrightarrow N \longrightarrow 0\] 
is a short exact sequence of modules over a local ring $R,$ then \\
$a)$ If $depth \ M < depth \ N,$ then $depth  \ U=depth \ M.$\\
$b)$ If $depth \ M > depth \ N,$ then $depth  \ U=depth \ N + 1.$
\end{lemma}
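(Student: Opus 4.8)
The plan is to reduce everything to the cohomological characterization of depth. For a finitely generated module $M$ over the local ring $(R,\mathfrak{m})$ with residue field $k=R/\mathfrak{m}$, one has
\[\operatorname{depth} M = \min\{\, i : \operatorname{Ext}^i_R(k,M)\neq 0 \,\},\]
and I would take this as the starting point; it follows from the fact that depth equals the length of a maximal $M$-regular sequence in $\mathfrak{m}$, which is detected by the vanishing of the lower $\operatorname{Ext}$ modules. Write $a=\operatorname{depth} U$, $b=\operatorname{depth} M$, $c=\operatorname{depth} N$.

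First I would apply the functor $\operatorname{Hom}_R(k,-)$ to the short exact sequence $0\to U\to M\to N\to 0$, producing the long exact sequence
\[\cdots \to \operatorname{Ext}^{i-1}_R(k,N)\to \operatorname{Ext}^i_R(k,U)\to \operatorname{Ext}^i_R(k,M)\to \operatorname{Ext}^i_R(k,N)\to \cdots.\]
The whole argument is then a matter of reading off which $\operatorname{Ext}$ modules vanish at the critical degrees. For part $(a)$, assume $b<c$. For every $i\le b-1$ both neighbours $\operatorname{Ext}^i_R(k,M)$ and $\operatorname{Ext}^{i-1}_R(k,N)$ vanish (as $i<b$ and $i-1<c$), forcing $\operatorname{Ext}^i_R(k,U)=0$; at degree $i=b$ the segment $\operatorname{Ext}^{b-1}_R(k,N)\to\operatorname{Ext}^b_R(k,U)\to\operatorname{Ext}^b_R(k,M)\to\operatorname{Ext}^b_R(k,N)$ has both outer $N$-terms zero (since $b-1<c$ and $b<c$), so $\operatorname{Ext}^b_R(k,U)\cong\operatorname{Ext}^b_R(k,M)\neq 0$. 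Hence the least degree of non-vanishing is exactly $b$, giving $a=b$.

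For part $(b)$, assume $b>c$, that is, $c+1\le b$. For every $i\le c$ the terms $\operatorname{Ext}^i_R(k,M)$ (as $i\le c<b$) and $\operatorname{Ext}^{i-1}_R(k,N)$ (as $i-1<c$) vanish, so $\operatorname{Ext}^i_R(k,U)=0$. At degree $c+1$ the segment $\operatorname{Ext}^c_R(k,M)\to\operatorname{Ext}^c_R(k,N)\to\operatorname{Ext}^{c+1}_R(k,U)$ has $\operatorname{Ext}^c_R(k,M)=0$ (again $c<b$), so the nonzero module $\operatorname{Ext}^c_R(k,N)$ injects into $\operatorname{Ext}^{c+1}_R(k,U)$, yielding $\operatorname{Ext}^{c+1}_R(k,U)\neq 0$ and therefore $a=c+1$.

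The step I expect to require the most care is the bookkeeping in part $(b)$ at the boundary case $b=c+1$: there $\operatorname{Ext}^{c+1}_R(k,M)$ need not vanish, so one cannot expect a clean isomorphism as in part $(a)$. The point to emphasize is that the injection $\operatorname{Ext}^c_R(k,N)\hookrightarrow\operatorname{Ext}^{c+1}_R(k,U)$ already certifies non-vanishing on its own, while the vanishing in all lower degrees is untouched; hence the minimal degree defining $\operatorname{depth} U$ lands precisely at $c+1$, independently of the behaviour of the sequence in higher degrees.
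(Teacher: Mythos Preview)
Your argument is correct; it is the standard proof of the Depth Lemma via the long exact sequence in $\operatorname{Ext}^{\bullet}_R(k,-)$ and Rees' characterization $\operatorname{depth} M=\min\{i:\operatorname{Ext}^i_R(k,M)\neq 0\}$. Note, however, that the paper does not give its own proof of this lemma: it is quoted verbatim from Villarreal's book and used as a black box, so there is nothing in the paper to compare against. What you have written is essentially the textbook argument one would find in the cited reference, with the one reasonable addition that you explicitly assume the modules are finitely generated (needed for the $\operatorname{Ext}$ characterization to hold), a hypothesis the paper's statement leaves implicit.
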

The most of the statments of the {\em Depth Lemma} are wrong if we replace {\em depth}
by  {\em sdepth.} Rauf $\cite{R}$ proved the analog of $Lemma \ 2(a)$ for {\em sdepth:}
\begin{lemma}
Let \[0\longrightarrow U \longrightarrow M \longrightarrow N \longrightarrow 0\] 
be an exact sequence of finitely generated $\mathbb{Z}^{n}$-graded $S$-modules.
Then \[sdepth \ M\geq min\{sdepth \ U, sdepth \ N\}.\]
\end{lemma}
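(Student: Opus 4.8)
The plan is to build an explicit Stanley decomposition of $M$ out of optimal Stanley decompositions of $U$ and $N$. I would regard the injection as an inclusion $\iota\colon U\hookrightarrow M$ and write $\pi\colon M\twoheadrightarrow N$ for the surjection, so that $N\cong M/\iota(U)$ as $\mathbb{Z}^{n}$-graded modules. Fix Stanley decompositions realizing the two Stanley depths,
\[U=\bigoplus_{i=1}^{p}u_{i}K[Z_{i}],\qquad N=\bigoplus_{j=1}^{q}v_{j}K[W_{j}],\]
with $sdepth\ U=\min_{i}|Z_{i}|$ and $sdepth\ N=\min_{j}|W_{j}|$.

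Since $\pi$ is a graded surjection, each homogeneous generator $v_{j}$ lifts to a homogeneous element $\widetilde{v}_{j}\in M$ of the same degree with $\pi(\widetilde{v}_{j})=v_{j}$. I would then propose
\[\mathcal{D}\colon\quad M=\bigoplus_{i=1}^{p}\iota(u_{i})K[Z_{i}]\ \oplus\ \bigoplus_{j=1}^{q}\widetilde{v}_{j}K[W_{j}]\]
as a candidate Stanley decomposition, and reduce everything to the single claim that the monomials $\iota(u_{i})m$ (for $m$ a monomial in $K[Z_{i}]$) together with the monomials $\widetilde{v}_{j}m'$ (for $m'$ a monomial in $K[W_{j}]$) form a $\mathbb{Z}^{n}$-homogeneous $K$-basis of $M$.

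To prove spanning, I would take a homogeneous $x\in M$, expand $\pi(x)$ in the decomposition of $N$ as $\sum_{j}\sum_{m'}c_{j,m'}v_{j}m'$, and set $y=\sum_{j}\sum_{m'}c_{j,m'}\widetilde{v}_{j}m'$; then $\pi(x-y)=0$, so $x-y\in\iota(U)$, and expanding the preimage of $x-y$ in the decomposition of $U$ expresses $x$ in terms of the proposed basis. For linear independence, I would apply $\pi$ to a vanishing $K$-linear combination $\sum d_{i,m}\iota(u_{i})m+\sum c_{j,m'}\widetilde{v}_{j}m'=0$: the $\iota(u_{i})$-terms lie in $\ker\pi$ and disappear, forcing all $c_{j,m'}=0$ by independence in $N$; the remaining combination lies in $\iota(U)$, and injectivity of $\iota$ together with independence in $U$ kills the $d_{i,m}$. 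This basis statement simultaneously shows that each $\iota(u_{i})K[Z_{i}]$ and each $\widetilde{v}_{j}K[W_{j}]$ is a free module over the corresponding polynomial subring, hence a genuine Stanley space, so $\mathcal{D}$ is a bona fide Stanley decomposition. Reading off its Stanley depth gives $sdepth(\mathcal{D})=\min\{\min_{i}|Z_{i}|,\min_{j}|W_{j}|\}=\min\{sdepth\ U,sdepth\ N\}$, and since $sdepth\ M$ is the maximum over all decompositions the inequality follows.

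The step I expect to be the main obstacle is the linear-independence half of the basis claim, because it is precisely there that exactness is indispensable: one must use both that $\iota(U)=\ker\pi$ (to split off the $N$-part cleanly under $\pi$) and that $\iota$ is injective (to descend the remaining relation to an honest relation in $U$). The lifting in the surjective direction is harmless thanks to the grading, but it should be recorded that the $\widetilde{v}_{j}$ are only well defined modulo $\iota(U)$; nothing in the argument depends on the choice, yet one must check that a different choice of lifts yields the same decomposition up to reindexing.
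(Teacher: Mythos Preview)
The paper does not give its own proof of this lemma; it is stated with attribution to Rauf~\cite{R} and used as a black box. Your argument is correct and is precisely the standard proof one finds in Rauf's paper: lift an optimal Stanley decomposition of $N$ through the surjection, adjoin the image of an optimal Stanley decomposition of $U$, and verify that the result is a Stanley decomposition of $M$ using exactness.

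One small remark on your closing comment: different choices of the lifts $\widetilde{v}_{j}$ do \emph{not} give the same decomposition up to reindexing---they give genuinely different Stanley decompositions of $M$. This is harmless, since every such choice yields a decomposition with the same Stanley depth $\min\{\,sdepth\ U,\,sdepth\ N\,\}$, which is all the argument needs; but the parenthetical claim as stated is inaccurate and should simply be dropped.
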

\begin{lemma}
If $I=I(P_{n})$, then $sdepth(S/I)=\left\lceil \frac{n}{3}\right\rceil.$
\end{lemma}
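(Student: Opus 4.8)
The plan is to establish the two inequalities $sdepth(S/I)\geq \left\lceil \frac{n}{3}\right\rceil$ and $sdepth(S/I)\leq \left\lceil \frac{n}{3}\right\rceil$ separately, the first by induction on $n$ and the second by a combinatorial facet argument. For the lower bound I would run an induction using the short exact sequence attached to a single variable together with the preceding $sdepth$ inequality (Rauf's lemma). The crucial design choice is \emph{which} variable to split on: the naive split at the endpoint $x_n$ does \emph{not} close the induction, since there one is led to $sdepth(S/I)\geq\min\{f(n-2)+1,\,f(n-1)\}$, and the term $f(n-1)=\left\lceil \frac{n-1}{3}\right\rceil$ is one too small exactly when $n\equiv 1\pmod 3$. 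Instead I would split at the \emph{second} vertex from the end, $x_{n-1}$, via $0\to S/(I:x_{n-1})\to S/I\to S/(I,x_{n-1})\to 0$.

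The computation is then routine bookkeeping: one checks $(I:x_{n-1})=(I(P_{n-3}),x_{n-2},x_n)$ and $(I,x_{n-1})=(I(P_{n-2}),x_{n-1})$, whence $S/(I:x_{n-1})\cong \bigl(K[x_1,\dots,x_{n-3}]/I(P_{n-3})\bigr)[x_{n-1}]$ and $S/(I,x_{n-1})\cong\bigl(K[x_1,\dots,x_{n-2}]/I(P_{n-2})\bigr)[x_n]$. Since adjoining a free variable raises $sdepth$ by at least one (adjoin the new variable to every $K[Z_j]$ in a Stanley decomposition), the inductive hypothesis gives $sdepth\,S/(I:x_{n-1})\geq \left\lceil \frac{n-3}{3}\right\rceil+1=\left\lceil \frac{n}{3}\right\rceil$ and $sdepth\,S/(I,x_{n-1})\geq \left\lceil \frac{n-2}{3}\right\rceil+1\geq\left\lceil \frac{n}{3}\right\rceil$. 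Rauf's lemma then yields $sdepth(S/I)\geq\left\lceil \frac{n}{3}\right\rceil$. The base cases $n=1,2,3$ are verified directly.

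For the upper bound I would invoke the poset method of \cite{HVZ}. As $I=I(P_n)$ is squarefree, $S/I$ is the Stanley--Reisner ring of the independence complex $\Delta$ of $P_n$, and every Stanley decomposition corresponds to a partition of the poset of faces (independent sets) into intervals $[F,G]$, each interval contributing a Stanley space of dimension $|G|$, with $sdepth(S/I)$ equal to the maximum over such partitions of $\min|G|$. Now if $F$ is a \emph{facet}, i.e.\ a maximal independent set, then in any partition the interval containing $F$ is forced to have top equal to $F$ itself, hence dimension $|F|$. Therefore $sdepth(S/I)\leq\min\{\,|F|: F\text{ a maximal independent set of }P_n\,\}$, and this minimum is the independent domination number of the path, which equals $\left\lceil \frac{n}{3}\right\rceil$ (take every third vertex, e.g.\ those $\equiv 2\pmod 3$). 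Combining the two bounds gives the claimed equality.

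The main obstacle is closing the induction in the lower bound: one must recognize that the endpoint recursion genuinely fails for $n\equiv 1\pmod 3$ and that splitting at $x_{n-1}$ repairs this, because on the colon side it drops $n$ by exactly $3$ and so tracks $\left\lceil \frac{n}{3}\right\rceil$ on the nose. A secondary technical point to get right is the identification that the colon and sum ideals really are path ideals on fewer vertices tensored with a polynomial ring in the freed variable, so that the inductive hypothesis applies and the ``$+1$'' coming from that variable is legitimate even though $sdepth$ is computed over the full ring $S$ (the annihilated variables cannot appear in any Stanley space).
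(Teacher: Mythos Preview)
Your lower bound argument is essentially identical to the paper's: the paper also runs induction via the short exact sequence at $x_{n-1}$, makes the same identifications $(I:x_{n-1})=(I(P_{n-3}),x_{n-2},x_n)$ and $(I,x_{n-1})=(I(P_{n-2}),x_{n-1})$, and applies Rauf's lemma together with \cite[Lemma~3.6]{HVZ} for the free variable. Your explicit remark on why splitting at $x_n$ fails when $n\equiv 1\pmod 3$ is a nice addition that the paper does not spell out.

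Your upper bound, however, is genuinely different and cleaner. The paper argues by contradiction with a case split on $n\bmod 3$: assuming a partition with all tops of size at least $\lceil n/3\rceil+1$, it exhibits in each residue class a specific vertex $A\in\mathcal P$ (built from the $e_{3i-1}$'s) that cannot be covered. Your argument bypasses this entirely by observing that in the face poset of the independence complex every facet $F$ must itself be the top of its interval, forcing $sdepth(S/I)\le\min\{|F|:F\text{ facet}\}$, i.e.\ the independent domination number of $P_n$, which is $\lceil n/3\rceil$. This is both shorter and more conceptual; it also makes transparent that the upper bound is a purely combinatorial invariant of the graph, whereas the paper's argument, while correct, obscures this by working coordinatewise. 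The trade-off is that the paper's explicit poset computations serve as a warm-up for the substantially harder upper bound for $S/I^t$ with $t\ge 2$, where no squarefree shortcut is available.
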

\begin{proof}
For $n\leq 5$ the result holds very easy. The proof is by induction on $n\geq 6.$ 
Consider the short exact sequence:
\[0\longrightarrow S/(I:x_{n-1}) \stackrel{x_{n-1}}{\longrightarrow} S/I 
\longrightarrow S/(I,x_{n-1}) \longrightarrow 0.\] 
Firstly, note that $(I:x_{n-1})=(J,x_{n-2},x_{n})$ and $(I,x_{n-1})=(L,x_{n-2})$
 where $J=I(P_{n-3}), \ L=I(P_{n-2}).$ By induction on $n$ and 
($\cite{HVZ}$, $Lemma \ 3.6$), 
\[sdepth(S/(I:x_{n-1}))=sdepth(S^{'}[x_{n-1}]/J)=sdepth(S^{'}/J)+1=
\left\lceil \frac{n-3}{3} \right\rceil+1=\left\lceil \frac{n}{3}\right\rceil\]
where $S^{'}=K[x_{1},\ldots, x_{n-3}]$ and
\[sdepth(S/(I,x_{n-1}))=sdepth(K[x_{1},\ldots,x_{n-2}]/L)+1=\left\lceil \frac{n-2}{3} 
\right\rceil +1.\]
So, by $Lemma \ 3,$ $sdepth(S/I)\geq min\{sdepth(S/(I:x_{n-1})),sdepth(S/(I,x_{n-1}))\}=
\left\lceil \frac{n}{3}\right\rceil.$\\

Now we'll prove the another inequality, $sdepth(S/I)\leq \left\lceil \frac{n}{3}
\right\rceil.$ We identify $S/I$ with the $\mathbb{Z}^n$-graded $K$-subvector space 
$I^{c}$ of $S$ which is generated by all monomials $u\in S \setminus I.$\\
The {\em characteristic poset} (see $\cite{HVZ}$) of $S/I$ is 
 \[\mathcal{P}=\{a\in \mathbb{N}^{n}\ : \ x^{a}\in I^{c} \ and \
 x^{a} | x_{1}x_{2}\cdots x_{n}\},\] where $x^{a}=x_{1}^{a(1)}x_{2}^{a(2)}\cdots 
x_{n}^{a(n)}$ and $a=(a(1),\ldots,a(n))\in \mathbb{N}^{n}.$\\
For $d\in \mathbb{N}$ and $\alpha \in \mathbb{N}^{n}$ let
\[\mathcal{P}_{d}:=\{a\in \mathcal{P} \ :  |a|=d\} \ and \ \mathcal{P}_{d, \alpha}:=
\{a\in \mathcal{P}_{d} \ :  x^{\alpha} | x^{a}\},\]
where for $a=(a(1),\ldots,a(n))\in \mathbb{N}^{n},$ $|a|:=\sum_{i=1}^{n}a(i).$\\
We define a natural partial order on $\mathbb{N}^{n}$ as follows: $a\geq b$ if and only if 
$a(i)\geq b(i)$ for $i=1,\ldots, n$ and we say that $b$ $\it{cover}$ $a.$\\
Assume $sdepth(S/I)\geq \left\lceil \frac{n}{3}
\right\rceil +1$ $\Leftrightarrow$ there exists a partition of $\mathcal{P}=\bigcup_{i=1}^
{r}[F_{i},G_{i}]$ such that $min_{i=1}^{r}|G_{i}|=\left\lceil \frac{n}{3}\right\rceil+1.$
We denote by $\{e_{1},\ldots,e_{n}\}$ the canonical base of $\mathbb{R}^n$ and 
$\delta_{ij}$ the Kronecker'symbol.\\ We have three cases to study:
\begin{enumerate}
	\item If $n=3k\geq 6,$ then $\mathcal{P}_{k+1, \ \alpha}=\{\alpha +e_{3k-2}+e_{3k}\}$, 
	where $\alpha=\sum_{i=1}^{k-1}e_{3i-1}.$ 
	So, $A=\sum_{i=1}^{k}e_{3i-1}\in \mathcal{P}_{k}$ 
	is not covered. A contradiction!
	\item If $n=3k+1\geq 7,$ then $\mathcal{P}_{k+2, \ \alpha}=
	\{\alpha +e_{3k-5}+e_{3k-3}+e_{3k-1}+e_{3k+1}\}$, 
	where $\alpha=\sum_{i=1}^{k-2}e_{3i-1}.$ 
	So, $A=\sum_{i=1}^{k-1}e_{3i-1}
	\in \mathcal{P}_{k-1}$ is not covered. A contradiction!
		\item If $n=3k+2\geq 8,$ then $\mathcal{P}_{k+2, \ \alpha}=
	\{\alpha +e_{3k-2}+e_{3k}+e_{3k+2}\}$, where $\alpha=\sum_{i=1}^{k-1}e_{3i-1}.$ 
	So, $A=\sum_{i=1}^{k}e_{3i-1} \in \mathcal{P}_{k}$ 
	is not covered. A contradiction!
\end{enumerate}
Therefore, the required conclusion follows.
\end{proof}
\begin{lemma}
Let $t\geq 1,$ $G$ be a graph, $u, v$ vertices of $G,$ $I=I(G)$ such that 
$sdepth(S/(I^{t}:uv))\geq s,$ $sdepth(S/(I^t, u))\geq s$ and $sdepth(S/((I^{t}:u), v))
\geq s$ for some $s\geq 0,$ then $sdepth(S/I^{t})\geq s.$
\end{lemma}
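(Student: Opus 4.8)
The plan is to obtain the bound from two nested short exact sequences, applying Rauf's inequality (Lemma 3) at each stage. The basic tool is that for any monomial $w$ and any monomial ideal $J$, multiplication by $w$ yields the short exact sequence
\[0\longrightarrow S/(J:w)\stackrel{w}{\longrightarrow} S/J\longrightarrow S/(J,w)\longrightarrow 0,\]
because the multiplication map is well defined and injective on $S/(J:w)$, with image $(wS+J)/J$ and hence cokernel $S/(J,w)$.

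First I would apply this with $J=I^{t}$ and $w=u$, obtaining the sequence
\[0\longrightarrow S/(I^{t}:u)\stackrel{u}{\longrightarrow} S/I^{t}\longrightarrow S/(I^{t},u)\longrightarrow 0.\]
By Lemma 3 this gives $sdepth(S/I^{t})\geq\min\{sdepth(S/(I^{t}:u)),\,sdepth(S/(I^{t},u))\}$. The second term is $\geq s$ by the second hypothesis, so the whole problem reduces to proving the single bound $sdepth(S/(I^{t}:u))\geq s$.

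To establish this I would run the same construction a second time, now with $J=I^{t}:u$ and $w=v$. The algebraic identity that makes everything fit is $(I^{t}:u):v=I^{t}:(uv)$, which rewrites the sequence as
\[0\longrightarrow S/(I^{t}:uv)\stackrel{v}{\longrightarrow} S/(I^{t}:u)\longrightarrow S/((I^{t}:u),v)\longrightarrow 0.\]
A second application of Lemma 3 yields $sdepth(S/(I^{t}:u))\geq\min\{sdepth(S/(I^{t}:uv)),\,sdepth(S/((I^{t}:u),v))\}\geq s$, where both terms are bounded below by $s$ using the first and third hypotheses respectively. Chaining this into the previous estimate gives $sdepth(S/I^{t})\geq s$, as desired.

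Since each of the two steps is a direct invocation of Lemma 3, I do not expect a genuine obstacle here; the entire content of the argument is choosing the right pair of monomials to split off and recognizing how the three hypotheses align with the quotients that appear. The only point demanding a moment of care is the colon identity $(I^{t}:u):v=I^{t}:(uv)$ together with the verification that the three modules occurring in the two sequences are \emph{exactly} those whose Stanley depths are assumed to be at least $s$, so that the minima collapse to $s$ at both stages.
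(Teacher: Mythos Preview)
Your proof is correct and follows exactly the same strategy as the paper: two short exact sequences of the form $0\to S/(J:w)\to S/J\to S/(J,w)\to 0$ with $(J,w)=(I^t,u)$ and $(J,w)=(I^t{:}u,\,v)$, each combined with Lemma~3. The only difference is cosmetic---you present the outer sequence first and then reduce to the inner one, while the paper handles the inner sequence first---and you spell out the identity $(I^t:u):v=I^t:uv$ and the exactness more explicitly than the paper does.
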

\begin{proof}
Applying Lemma 3 to the short exact sequence 
\[0\longrightarrow S/(I^{t}:uv) \stackrel{v}{\longrightarrow} S/(I^{t}:u) 
\longrightarrow S/((I^{t}: u), v) \longrightarrow 0\] 
yields $sdepth(S/(I^{t}, u))\geq s.$ Applying again Lemma 3 to the sequence 
\[0\longrightarrow S/(I^{t}:u) \stackrel{u}{\longrightarrow} S/I^{t} 
\longrightarrow S/(I^{t}, u) \longrightarrow 0\] 
we have $sdepth(S/I^{t})\geq s.$
\end{proof}
\begin{theorem}
For $n\geq 2$ and $t\geq 1$ the power of the path ideal $I=I(P_{n})$ has 
Stanley depth, 
$sdepth(S/I^{t})=max\{\left\lceil \frac{n-t+1}{3}\right\rceil, 1\}.$
\end{theorem}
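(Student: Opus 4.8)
The plan is to prove the two inequalities $\operatorname{sdepth}(S/I^t)\ge s$ and $\operatorname{sdepth}(S/I^t)\le s$ separately, writing $s=\max\{\lceil (n-t+1)/3\rceil,1\}$ and inducting on $n+t$; the separation is forced since, as remarked after Lemma~2, no Depth-Lemma analogue is available for sdepth and only the one-sided estimate of Lemma~3 may be used. The anchor cases are $t=1$, which is exactly Lemma~4 (and $s=\lceil n/3\rceil$ there), together with finitely many small $n$ checked directly. The floor $\operatorname{sdepth}(S/I^t)\ge 1$ holds for every $t$: the path is bipartite, hence $I$ is normally torsion-free, so $\operatorname{Ass}(S/I^t)=\operatorname{Min}(I)$ for all $t$; since for $n\ge 2$ every minimal prime of $I$ has height $<n$, we get $\mathfrak m\notin\operatorname{Ass}(S/I^t)$, and by the socle criterion ($\operatorname{sdepth}(S/J)=0$ iff $\mathfrak m\in\operatorname{Ass}(S/J)$) the Stanley depth is at least $1$. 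Thus in the inductive step we may assume $s\ge 2$, i.e. $n-t\ge 3$.

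For the lower bound I would apply Lemma~5 to the terminal edge, $u=x_{n-1}$ and $v=x_n$, and bound each of its three quotients below by $s$. First, killing $x_{n-1}$ isolates $x_n$ and leaves the path ideal of $P_{n-2}$ on $x_1,\dots,x_{n-2}$, so $S/(I^t,x_{n-1})\cong K[x_n]\otimes_K\bigl(S''/I(P_{n-2})^t\bigr)$ with $S''=K[x_1,\dots,x_{n-2}]$; by induction and the free-variable rule (\cite{HVZ}, Lemma~3.6) its Stanley depth is $1+\lceil (n-t-1)/3\rceil=\lceil (n-t+2)/3\rceil\ge s$. Second, because $x_{n-1}x_n$ is a generator of $I$ and $x_n$ pins down every use of that generator, the colon collapses, $(I^t:x_{n-1}x_n)=I^{t-1}$, and induction on $t$ gives $\operatorname{sdepth}(S/I^{t-1})=\lceil (n-t+2)/3\rceil\ge s$. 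Third, after setting $x_n=0$ the ideal $((I^t:x_{n-1}),x_n)$ becomes the colon of $I(P_{n-1})^t$ by its leaf $x_{n-1}$, which I would identify as a power of a shorter path ideal tensored with the free variable(s) freed by the leaf (for $t=1$ it is $(x_{n-2})+I(P_{n-3})$, giving sdepth $1+\lceil(n-3)/3\rceil=s$); induction then yields $\ge s$ again. With the three estimates in hand, Lemma~5 gives $\operatorname{sdepth}(S/I^t)\ge s$.

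For the upper bound the exact sequences are useless, so I would return to the characteristic-poset machinery of \cite{HVZ} already exploited in Lemma~4, now with the bounding vector $g=(t,\dots,t)$ since each variable occurs to degree at most $t$ in a generator of $I^t$. A partition $\mathcal P=\bigcup_i[F_i,G_i]$ then has Stanley depth $\min_i|\{\,j:G_i(j)=t\,\}|$, and I would assume for contradiction that a partition exists with this number $\ge s+1$ for all $i$. It is worth stressing that the associated-prime bound $\operatorname{sdepth}(S/I^t)\le\dim S/P$ is powerless here: normal torsion-freeness forces $\operatorname{Ass}(S/I^t)=\operatorname{Min}(I)$, which only yields the $t$-independent estimate $\lceil n/3\rceil$, far from $s$ once $t\ge 2$, so a genuinely combinatorial count is unavoidable. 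Mimicking the three residue cases of Lemma~4, I would exhibit one explicit exponent vector $A\in\mathcal P$ of small degree all of whose admissible covers $[F,G]$ are forced to satisfy $|\{j:G(j)=t\}|\le s$, so that $A$ can lie in no interval of the assumed partition, the required contradiction.

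The main obstacle is twofold and purely computational and combinatorial. On the lower-bound side it is the exact determination of the colon ideals $(I^t:x_{n-1}x_n)$ and $(I^t:x_{n-1})$: one must verify, by tracking how many times the terminal edge (resp. the leaf) can be used in a generator, that no spurious generators survive beyond the expected power of a shorter path ideal, so that the free-variable count and the three ceilings close up. On the upper-bound side it is the construction of the uncoverable vector $A$ with the correct multiplicities in $\{0,\dots,t\}^n$: unlike the squarefree setting of Lemma~4, the top $G$ of an interval may attain the cap $t$ in several coordinates simultaneously, so the argument that every admissible cover is \emph{too small} is delicate and must be run separately in each congruence class of $n-t$ modulo $3$.
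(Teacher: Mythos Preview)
Your overall architecture matches the paper's: Lemma~5 with $u=x_{n-1}$, $v=x_n$ for the lower bound, and the Herzog--Vl\u adoiu--Zheng poset for the upper bound. Two of the three colon estimates are exactly as in the paper. The third, however, contains a genuine gap.

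You write that $((I^t:x_{n-1}),x_n)$, viewed inside $S''=K[x_1,\dots,x_{n-1}]$, is the colon $(L^t:x_{n-1})$ with $L=I(P_{n-1})$, and then propose to ``identify [it] as a power of a shorter path ideal tensored with the free variable(s) freed by the leaf.'' That identification is correct only for $t=1$. For $t\ge 2$ the colon $(L^t:x_{n-1})$ is \emph{not} of the form $(x_{n-2},I(P_{n-3})^{t'})$ or any power of a shorter path ideal: e.g.\ for $L=(x_1x_2,x_2x_3)$ and $t=2$ one gets $(L^2:x_3)=(x_1x_2^2,\,x_2^2x_3)$, which is neither a power of $(x_1x_2)$ nor of the form claimed. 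Consequently induction cannot be applied directly here. The paper closes this gap by running a \emph{second} short exact sequence,
\[
0\longrightarrow S''/(L^t:x_{n-1}x_{n-2}) \stackrel{x_{n-2}}{\longrightarrow} S''/(L^t:x_{n-1}) \longrightarrow S''/((L^t:x_{n-1}),x_{n-2}) \longrightarrow 0,
\]
and only then do the two ends simplify: $(L^t:x_{n-1}x_{n-2})=L^{t-1}$ and $((L^t:x_{n-1}),x_{n-2})=(I(P_{n-3})^t,x_{n-2})$, both amenable to induction. Without this extra step your argument for the third estimate does not go through.

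On the upper bound, your plan to locate a single uncoverable vector $A$ (as in Lemma~4) is not quite what the paper does for $t\ge 2$. There the argument is a counting/bijection on whole \emph{families} of poset elements with prescribed patterns of $t$'s and $(t-1)$'s in the first $2a+2$ coordinates: one family with $\rho=a+1$ is in bijection with one family with $\rho=a$, yet must also cover a second family with $\rho=a$, forcing an uncovered element. A single witness $A$ in the style of Lemma~4 is unlikely to suffice once the entries range over $\{0,\dots,t\}$, so you should expect to upgrade to this family-counting argument.
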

\begin{proof}
Since $P_{n}$ is a bipartite graph it follows $depth(R/I^{t})\geq 1, 
(see \ \cite{M})$ and so,
$sdepth(R/I^{t})\geq 1, (see \ \cite{C})$ for all $t\geq 1.$\\ 
We'll show that $sdepth(S/I^{t})\geq \left\lceil \frac{n-t+1}{3}\right\rceil.$
The proof is by induction on $n\geq 2$ and $t\geq 1.$ The result holds for 
$n\leq 3$ and for all $t\geq 1;$ also by Lemma 4 the result holds for 
$n\geq 2$ and $t=1.$\\
 Assume $n\geq 2$ and $t\geq 2.$ Using Lemma 5, it is enough to show that:
\begin{enumerate}
\item $sdepth(S/(I^{t}:x_{n-1}x_{n}))\geq \left\lceil \frac{n-t+1}{3}\right\rceil;$
\item $sdepth(S/(I^{t},x_{n-1}))\geq \left\lceil \frac{n-t+1}{3}\right\rceil;$
\item $sdepth(S/((I^{t}:x_{n-1}),x_{n}))\geq \left\lceil \frac{n-t+1}{3}\right\rceil.$
\end{enumerate}
\begin{enumerate}
\item By induction on $t$ and since $(I^{t}:x_{n-1}x_{n})=I^{t-1}$ we have
\[sdepth(S/(I^{t}:x_{n-1}x_{n}))=sdepth(S/I^{t-1})\geq \left\lceil \frac{n-(t-1)+1}{3}
\right\rceil\geq\left\lceil \frac{n-t+1}{3}\right\rceil\]
\item Firstly, note that $(I^{t},x_{n-1})=(J^{t},x_{n-1}),$ where $J=I(P_{n-2}).$ \\
By induction on $n$ and $(\cite{HVZ},  Lemma \ 3.6)$ we have: 
\[sdepth(S/(I^{t},x_{n-1}))=sdepth(S^{'}[x_{n-1}, x_{n}]/(J^{t},x_{n-1}))=S^{'}/J^{t}+1
\geq \left\lceil \frac{n-2-t+1}{3}\right\rceil+1=\]
\[\left\lceil \frac{n-t+2}{3}\right\rceil\geq \left\lceil \frac{n-t+1}{3}\right\rceil,\]
where $S^{'}=K[x_{1},\ldots,x_{n-2}].$
\item Using $(\cite{HM}, Theorem \ 3.5)$ it follows $((I^{t}:x_{n-1}), x_{n})=
((L^{t}:x_{n-1}), x_{n}),$ where $L=I(P_{n-1}).$ Notice that $sdepth(S/((I^{t}:x_{n-1}), x_{n}))=
sdepth(S^{''}/(L^{t}:x_{n-1})),$ where $S^{''}=K[x_{1},\ldots,x_{n-1}].$\\
Consider the short exact sequence:
\[0\longrightarrow S^{''}/(L^{t}:x_{n-1}x_{n-2}) \stackrel{x_{n-2}}{\longrightarrow} 
S^{''}/(L^{t}:x_{n-1}) \longrightarrow S^{''}/((L^{t}:x_{n-1}),x_{n-2}) \longrightarrow 0.\] 
By induction on $n$ and since $(L^{t}:x_{n-1}x_{n})=L^{t-1}$ we have:
\[sdepth(S^{''}/(L^{t}:x_{n-1}x_{n-2}))=sdepth(S^{''}/L^{t-1})\geq 
\left\lceil \frac{n-1-(t-1)+1}{3}\right\rceil=\left\lceil \frac{n-t+1}{3}\right\rceil.\]
Also, we have $((L^{t}:x_{n-1}), x_{n-2})=((Q^{t}:x_{n-1}), x_{n-2})=(Q^{t}, x_{n-2}),$ 
where $Q=I(P_{n-3}).$\\
Let $S^{'''}=K[x_{1},\ldots,x_{n-3}].$ By induction on $n$ and $(\cite{HVZ}, Lemma \ 3.6),$
\[sdepth(S^{''}/(Q^{t}, x_{n-2}))=sdepth(S^{'''}[x_{n-1}]/ Q^{t})=sdepth(S^{'''}/ Q^{t})+1\geq\]
\[ \left\lceil \frac{n-3-t+1}{3}\right\rceil+1=\left\lceil \frac{n-t+1}{3}\right\rceil.\]
 Applying Lemma 3 to the sequence above, 
$sdepth(S^{''}/(L^{t}, x_{n-1}))\geq \left\lceil \frac{n-t+1}{3}\right\rceil$ and so, we obtain
$sdepth(S/((I^{t}:x_{n-1}),x_{n}))\geq \left\lceil \frac{n-t+1}{3}\right\rceil.$
\end{enumerate}
Therefore, we obtain $sdepth(S/I^{t})\geq max\{\left\lceil \frac{n-t+1}{3}\right\rceil, 1\}$ 
for any $t\geq 1.$ 

Now we'll prove the another inequality, $sdepth(S/I^{t})\leq max\{\left\lceil \frac{n-t+1}{3}
\right\rceil, 1\}$ for any $t\geq 1.$ By Lemma 4 the result holds for $t=1.$
Let  $t\geq 2$ fixed.
We identify $S/I^{t}$ with the $\mathbb{Z}^n$-graded $K$-subvector space 
$(I^{t})^{c}$ of $S$ which is generated by all monomials $u\in S \setminus I^{t}.$\\
The {\em characteristic poset} (see $\cite{HVZ}$) of $S/I^{t}$ is 
 \[\mathcal{P}=\{a\in \mathbb{N}^{n}\ : \ x^{a}\in (I^{t})^{c} \ and \
 x^{a} | (x_{1}x_{2}\cdots x_{n})^{t}\},\] where $x^{a}=x_{1}^{a(1)}x_{2}^{a(2)}\cdots 
x_{n}^{a(n)}$ and $a=(a(1),\ldots,a(n))\in \mathbb{N}^{n}.$\\

Let us first show why $sdepth(S/I^{t})\leq 1$ for any $t\geq n-2.$
Assume $sdepth(S/I^{t})\geq 2$ for any $t\geq n-2.$ According to Theorem 2.1.(\cite{HVZ}) 
there exists a partition of $\mathcal{P}=\bigcup_{i=1}^{r}[F_{i},G_{i}]$ such that 
$min_{i=1}^{r}\rho(G_{i})=2,$ where $\rho(G_{i})= |\{j : t=G_{i}(j)\}|$ is the cardinality 
of $\{j : t=G_{i}(j)\}.$\\
 For $t\geq n-2$ fixed, let the sets $[(t,t-1,t,0,\ldots)]:=
\{(t,\alpha_{2},t,\alpha_{4},\beta)\in \mathcal{P} \ | \ 0\leq \alpha_{2}+\alpha_{4}\leq t-1, 
 \beta \in \mathbb{N}^{n-4}, \ |\beta|=(t-1)(\left\lceil \frac{n}{2}\right\rceil-2)-\alpha_{2}-\alpha_{4}\}$,  
$[(t-1,t-1,t,0,\ldots)]:=\{(t-1,\alpha_{2},t,\alpha_{4},\beta)\in \mathcal{P} \ | \  
0\leq \alpha_{2}+\alpha_{4}\leq t-1, \beta \in \mathbb{N}^{n-4}, 
|\beta|=(t-1)(\left\lceil \frac{n}{2}\right\rceil-2)-\alpha_{2}-\alpha_{4}\}$ and
$[(t,t-1,t-1,0,\ldots)]:=\{(t,\alpha_{2},t-1,\alpha_{4},\beta)\in \mathcal{P} \ | \ 
0\leq \alpha_{2}+\alpha_{4}\leq t-1, 
 \beta \in \mathbb{N}^{n-4}, \ |\beta|=(t-1)(\left\lceil \frac{n}{2}\right\rceil-2)-\alpha_{2}-\alpha_{4}\}.$
Since the elements of $[(t,t-1,t,0,\ldots)]$ can only cover the elements of $[(t-1,t-1,t,0,\ldots)] \cup
[(t,t-1,t-1,0,\ldots)]$ and there is an one to one corespondence 
between the sets $[(t,t-1,t,0,\ldots)]$ and $[(t-1,t-1,t,0,\ldots)]$ and 
for any $\gamma\in [(t,t-1,t,0,\ldots)],$  $\delta\in [(t-1,t-1,t,0,\ldots)]$ and $\eta \in [(t,t-1,t-1,0,\ldots)]$
we have $|\gamma|-1=|\delta|=\eta, \ \rho(\gamma)=2, \ \rho(\delta)=\rho(\eta)=1$ then
there exists elements from  $[(t-1,t-1,t,0,\ldots)] \cup [(t,t-1,t-1,0,\ldots)]$ which can not be covered 
by elements of $[(t,t-1,t,0,\ldots)].$
Therefore $sdepth(S/I^{t})\leq 1$ for any $t\geq n-2$ and so 
$sdepth(S/I^{t})= 1$ for any $t\geq n-2.$

Using the same technique as above we show why $sdepth(S/I^{t})\leq
\left\lceil \frac{n-t+1}{3}\right\rceil$ for any $2\leq t\leq n-3.$
Let $2\leq t\leq n-3$ fixed and we'll denote by $a:=\left\lceil \frac{n-t+1}{3}\right\rceil.$
Assume $sdepth(S/I^{t})\geq a+1.$ According to Theorem 2.1.(\cite{HVZ}) 
there exists a partition of $\mathcal{P}=\bigcup_{i=1}^{r}[F_{i},G_{i}]$ such that 
$min_{i=1}^{r}\rho(G_{i})=a+1.$\\
Let the sets $[(t,t-1\underbrace{t,0,\dots,t,0}_{a- times},\ldots)]:=
\{(t,\alpha_{2},t,\alpha_{4},\ldots,t,\alpha_{2a+2},\beta)\in \mathcal{P}\ | \ 
\sum_{i=1}^{a+1}\alpha_{2i}\leq t-1, \beta\in \mathbb{N}^{n-2a-2}, \ |\beta|=
(t-1)(\left\lceil \frac{n}{2}\right\rceil-a)-\sum_{i=1}^{a+1}\alpha_{2i}\}$ ,
$[(t-1,t-1\underbrace{t,0,\dots,t,0}_{a- times},\ldots)]:=
\{(t-1,\alpha_{2},t,\alpha_{4},\ldots,t,\alpha_{2a+2},\beta)\in \mathcal{P}\ | \ 
\sum_{i=1}^{a+1}\alpha_{2i}\leq t-1, \beta\in \mathbb{N}^{n-2a-2}, \ |\beta|=
(t-1)(\left\lceil \frac{n}{2}\right\rceil-a)-\sum_{i=1}^{a+1}\alpha_{2i}\}$ and 
$[(t,t-1\underbrace{t,0,\dots,t,0}_{a-1- times},\ldots)]:=
\{(t,\alpha_{2},t,\alpha_{4},\ldots,t,\alpha_{2a},\beta)\in \mathcal{P}\ | \ 
\sum_{i=1}^{a}\alpha_{2i}\leq t-1, \beta\in \mathbb{N}^{n-2a}, \ |\beta|=
(t-1)(\left\lceil \frac{n}{2}\right\rceil-a+1)-\sum_{i=1}^{a}\alpha_{2i}\}.$
Since the elements of $[(t,t-1\underbrace{t,0,\dots,t,0}_{a- times},\ldots)]$ can
only cover the elements of $[(t-1,t-1\underbrace{t,0,\dots,t,0}_{a- times},\ldots)]$ or 
$[(t,t-1\underbrace{t,0,\dots,t,0}_{a-1- times},\ldots)]$ and
there is an one to one corespondence 
between the sets $[(t,t-1\underbrace{t,0,\dots,t,0}_{a- times},\ldots)]$ and 
$[(t-1,t-1\underbrace{t,0,\dots,t,0}_{a- times},\ldots)]$ and
for any $\gamma\in [(t,t-1\underbrace{t,0,\dots,t,0}_{a- times},\ldots)]$,  
$\delta\in [(t-1,t-1\underbrace{t,0,\dots,t,0}_{a- times},\ldots)]$ and 
$\eta\in [(t,t-1\underbrace{t,0,\dots,t,0}_{a-1- times},\ldots)]$
we have $|\gamma|-1=|\delta|=|\eta|, \ \rho(\gamma)=a+1, \ \rho(\delta)=a, \ \rho(\eta)=a$ 
then there exists elements from $[(t-1,t-1\underbrace{t,0,\dots,t,0}_{a- times},\ldots)]\cup 
[(t,t-1\underbrace{t,0,\dots,t,0}_{a-1- times},\ldots)]$ which can not be covered by elements
of $[(t,t-1\underbrace{t,0,\dots,t,0}_{a- times},\ldots)].$
Therefore $sdepth(S/I^{t})\leq
\left\lceil \frac{n-t+1}{3}\right\rceil$  and so we have the equality $sdepth(S/I^{t})=
\left\lceil \frac{n-t+1}{3}\right\rceil$  for any $2\leq t\leq n-3.$\\
Thus, we have $sdepth(S/I^{t})=max\{\left\lceil \frac{n-t+1}{3}\right\rceil, 1\}$ for any $t\geq 1.$
\end{proof}
By a theorem of Brodmann (\cite{B}), $depth(S/I^{t})$ is constant for $t>> 0.$ In (\cite{M}) 
Morey  proved  that $depth(S/I)=\left\lceil \frac{n}{3} 
\right\rceil$ and for the powers of $I$ is given a lower bound, 
$depth(S/I^{t})\geq max \{\left\lceil \frac{n-t+1}{3}\right\rceil, 1\}.$ As a consequence of 
the previous theorem we obtain a similar result to Brodmann' theorem on the Stanley depth.

\begin{corollary}
Stanley depth of factor of power of path ideal stabilizes, ie $sdepth(S/(I(P_{n}))^{t})=1$ 
for any $t\geq n-2.$
\end{corollary}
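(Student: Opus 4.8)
The plan is to read the conclusion directly off the explicit formula proved in the preceding theorem, so that the corollary becomes a one-line arithmetic consequence. By that theorem, for every $n\geq 2$ and every $t\geq 1$ one has $sdepth(S/(I(P_n))^t)=\max\{\lceil\frac{n-t+1}{3}\rceil,1\}$; hence all that remains is to determine for which exponents $t$ the maximum collapses onto its second argument.

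First I would examine the ceiling term as $t$ grows. Since $\lceil x\rceil\leq 1$ holds for a real number $x$ exactly when $x\leq 1$, we get $\lceil\frac{n-t+1}{3}\rceil\leq 1$ if and only if $\frac{n-t+1}{3}\leq 1$, i.e.\ if and only if $n-t+1\leq 3$, which is precisely the condition $t\geq n-2$. Therefore, for every $t\geq n-2$ the term $\lceil\frac{n-t+1}{3}\rceil$ is at most $1$, so $\max\{\lceil\frac{n-t+1}{3}\rceil,1\}=1$, and substituting into the formula of the preceding theorem yields $sdepth(S/(I(P_n))^t)=1$, as claimed.

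It is worth recording the complementary estimate, since it pins down $t=n-2$ as the genuine stabilization threshold: for $t<n-2$ one has $n-t+1\geq 4$, whence $\lceil\frac{n-t+1}{3}\rceil\geq 2$ and the Stanley depth is strictly larger than $1$. Thus the value drops to $1$ exactly at $t=n-2$ and stays there, which is the Brodmann-type stabilization the corollary asserts.

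As for where the difficulty lies, there is essentially none at this stage, because the entire content has already been discharged in proving the preceding theorem; in fact that proof contains a self-contained argument establishing $sdepth(S/I^t)=1$ for all $t\geq n-2$ via the characteristic poset. The corollary is merely a repackaging of that fact, and the only step I carry out here is the elementary monotonicity computation above. The main obstacle is therefore purely expository, namely to state the stabilization cleanly in parallel with Brodmann's classical theorem on depth.
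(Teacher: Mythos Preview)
Your proposal is correct and matches the paper's own treatment: the corollary is stated immediately after Theorem~6 without a separate proof, so it is understood as the one-line arithmetic consequence of the formula $sdepth(S/I^t)=\max\{\lceil\frac{n-t+1}{3}\rceil,1\}$ that you spell out. Your added remark that $t=n-2$ is the exact threshold, and your pointer to the self-contained characteristic-poset argument for $t\geq n-2$ inside the proof of Theorem~6, are accurate and go slightly beyond what the paper makes explicit.
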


\end{document}